\newtheorem{theorem}{Theorem}[section]
\theoremstyle{remark}
\newcounter{minutes}\setcounter{minutes}{\time}
\newcounter{hours}\setcounter{hours}{\time}
\newcommand{\C}{\mathbb{C}} 
\newcommand{\D}{\mathbb{D}} 
\newcommand{\symD}{\Omega} 
\begin{document}

\title[Harmonic Shears and Numerical Conformal Mappings]{Harmonic Shears and Numerical Conformal Mappings}
\date{\today}


\author[Tri Quach]{Tri Quach}
\email{tri.quach@aalto.fi}
\address{Aalto University,
         Department of Mathematics and Systems Analysis,
         P.O. Box 11100,
         FI-00076 Aalto,
         Finland.}

\keywords{Harmonic univalent mappings, convex along horizontal directions,
harmonic shear, numerical method, conformal mappings, minimal surfaces.}
\subjclass{Primary 30C99; Secondary 30C30, 31A05, 65E05}

\thanks{
The author was supported by a grant (ma2011n25) from the Magnus Ehrnrooth Foundation.}

\begin{abstract}
In this article we introduce a numerical algorithm for finding harmonic mappings by using the {\it shear construction} introduced by Clunie and Sheil-Small in 1984. The MATLAB implementation of the algorithm is based on the numerical conformal mapping package, the Schwarz-Christoffel toolbox, by T.~Driscoll. Several numerical examples are given. In addition, we discuss briefly the minimal surfaces associated with harmonic mappings and give a numerical example of minimal surfaces.


\end{abstract}

\maketitle




\section{Introduction}

A complex-valued harmonic function $f=u+iv$, defined on the unit disk $\D$, is called a {\it harmonic mapping} if the coordinate function $u$ and $v$ are real harmonic, and it maps $\D$ univalently onto a domain $\symD \subset \C$. Note that it is not required that the real part and the imaginary part of $f$ are harmonic conjugate functions, i.e., satisfy the Cauchy-Riemann equations. In 1984, Clunie and Sheil-Small \cite{clunie} showed that many classical results for conformal mappings have natural analogues for harmonic mappings, and hence they can be regarded as a natural generalization of conformal mappings. Since then, this class of mappings has attracted considerable interest in complex analysis, see e.g. \cite{Dur}.

An important method for studying geometric properties of harmonic mappings is called harmonic shearing. Since its introduction in \cite{clunie}, it has been researched by many authors. For example, Greiner \cite{greiner} studied harmonic shears of conformal mappings from the unit disk onto infinite strips and other domains. Shearing of conformal mappings from the unit disk onto regular polygonal domains have been studied in the paper by Driver and Duren \cite{driver}, and by the author with Ponnusamy and Rasila \cite{pqr2}.

The shear construction makes use of a conformal mapping $\varphi$ and an analytic dilatation
$\omega$. For required assumptions for the dilatation $\omega$, see Section \ref{section: harmonic-mapping}. 

An applet \cite{rolf} for exploring harmonic shears with user defined conformal mappings is written by Rolf and examples are given in \cite{dorff}. However, no accuracy tests against analytic form have been given in \cite{dorff}. Note that, the applet requires an analytic expression for the conformal mapping. 

In this paper, the Schwarz-Christoffel toolbox by Driscoll \cite{driscoll} is used to provide a conformal mapping for the presented numerical method for harmonic shear. The toolbox computes a conformal mapping from the unit disk $\D$ onto a polygonal domain. See \cite{dt, trefethen} for details for the construction of a conformal mapping. 

In principle, other methods can be used to obtain conformal mappings as well. An example of a numerical method, that does not involve the Schwarz-Christoffel formula, is the Zipper algorithm of Marshall \cite{Mar, MR}. A method involving the harmonic conjugate function is presented in \cite[pp. 371-374]{Hen3}. 

An algorithm using the harmonic conjugate function and properties of quadrilateral is given by the author in a joint work with Hakula and Rasila \cite{hqr}. The algorithm is based on properties of the conformal modulus originating on the theory of quasiconformal mappings \cite{ahlfors,lv,ps}. The method is suitable for simply and doubly connected domains, which may have curved boundaries and even cusps. The implementation of the algorithm is based on the $hp$-FEM from \cite{hrv1}. 

An important application of harmonic mappings is related to minimal surfaces. A harmonic function $f = h + \overline{g}$ can be lifted to a minimal surface if and only if the dilatation $\omega$ is the square of an analytic function. Suppose that $\omega = q^2$ for some analytic function $q$ in the unit disk $\D$. Then the corresponding minimal surface has the form
\[
\{u,v,w\} = \{\textrm{Re}\, f,  \textrm{Im}\, f, 2\,\textrm{Im}\, \psi\},
\]
where
\[
\psi(z) = \int_0^z q(\zeta) \frac{\varphi(\zeta)}{1-\omega(\zeta)} \, d\zeta.
\]
The approach taken in this paper is based on the Weierstrass-Enneper representation. For further information about the relation between harmonic mappings and minimal surfaces can be found from \cite{Dur}. Minimal surfaces are closely related to many interesting phenomena in natural science and engineering, from mathematical models of soap bubble surfaces \cite{isenberg}, to topics in molecular engineering \cite{barth}, and tensile structures \cite{berger}. For a further reading on minimal surfaces and their applications see, e.g., \cite{nitsche,osserman}.

In this paper, the {\sc Matlab} visualisations of harmonic mappings are kept as close to the original images (cf. \cite{driver,pqr2}) as possible for easier comparison. Besides given illustrations, one can enhance the visualisations, e.g., by using phase portrait method \cite{wegert12, wegert13} or by using domain coloring method \cite{lundmark,poelke}.



\section{Harmonic Mappings} \label{section: harmonic-mapping}

A harmonic mapping in $\D$
has a canonical presentation $f=h+\overline{g}$, where $h$ and $g$ are analytic in $\D$ and
$g(0)=0$. A harmonic mapping $f=h+\overline{g}$ is called {\it sense-preserving}
if the Jacobian $J_f = |h'|^2-|g'|^2$ is positive in $\D$. Then $f$ has an {\it analytic dilatation}
$\omega =g'/h'$ such that $|\omega(z)| < 1$ for $z\in\D$. For basic properties of harmonic
mappings, see \cite{Dur}.

A domain $\symD \subset \C$ is said to be {\it convex in the horizontal direction} (CHD)
if its intersection with each horizontal line is connected (or empty).
A univalent harmonic mapping is called a CHD mapping if its range is a CHD domain.
Construction of a harmonic mapping $f$ with prescribed dilatation
$\omega$ can be done by the {\it shear construction} devised by
Clunie and Sheil-Small \cite{clunie}. For reader's convenience, we recall the construction along with its basic properties.

\begin{theorem}
Let $f = h+\overline{g}$ be a harmonic and locally univalent in the unit disk $\D$.
Then $f$ is univalent in $\D$ and its range is a {\rm CHD} domain if and only if $h-g$ is a conformal
mapping of $\D$ onto a {\rm CHD} domain.
\end{theorem}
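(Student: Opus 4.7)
The plan is to use the identity $\operatorname{Im} f = \operatorname{Im}(h-g) = \operatorname{Im}\phi$, where $\phi := h-g$, so horizontal level sets of $f$ in $\D$ coincide with level sets of $\operatorname{Im}\phi$. The proof then reduces to showing that along any such level curve, both $\operatorname{Re} f$ and $\operatorname{Re}\phi$ move strictly monotonically. This is the one nontrivial computation, and everything else is connectivity bookkeeping.

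The key computation I would carry out first. Let $\gamma$ be a smooth arc in $\D$ on which $\operatorname{Im}\phi$ is constant, parametrized by $t$. Along $\gamma$, $\operatorname{Im}(\phi'(z)\dot z)=0$, so $\phi'(z)\dot z$ is real; orient $\gamma$ so $\phi'(z)\dot z>0$. Then a short calculation gives
\[
\frac{d}{dt}\operatorname{Re}\phi(z(t)) \;=\; \phi'(z)\dot z \;>\;0,
\]
and, using that $(h'+g')\overline{(h'-g')}$ has real part equal to $|h'|^2-|g'|^2=J_f$,
\[
\frac{d}{dt}\operatorname{Re} f(z(t)) \;=\; \operatorname{Re}\!\bigl((h'+g')\dot z\bigr) \;=\; \frac{J_f(z)}{|\phi'(z)|}\,\bigl(\phi'(z)\dot z\bigr) \;>\;0.
\]
Here I use local univalence of $f$ (so $J_f>0$) and the fact that $\phi'=h'(1-\omega)$ is nonvanishing since $|\omega|<1$.

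For the direction $(\Leftarrow)$, assume $\phi$ is conformal with CHD range. If $f(z_1)=f(z_2)$, then $\operatorname{Im}\phi(z_1)=\operatorname{Im}\phi(z_2)$, so $\phi(z_1)$ and $\phi(z_2)$ lie on a horizontal line in $\phi(\D)$; by CHD the segment between them is in $\phi(\D)$, and its conformal preimage is an arc $\gamma\subset\D$ joining $z_1,z_2$. The monotonicity of $\operatorname{Re} f$ along $\gamma$ forces $z_1=z_2$, proving univalence of $f$; the same argument shows $f(\gamma)$ fills the entire horizontal slice of $f(\D)$, giving CHD.

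For $(\Rightarrow)$, assume $f$ is univalent with CHD range. Because $\operatorname{Im} f=\operatorname{Im}\phi$, the level set $\gamma_c:=\{\operatorname{Im}\phi=c\}\cap\D$ equals the $f$-preimage of the horizontal slice of $f(\D)$ at height $c$; univalence of $f$ and CHD of $f(\D)$ make this set connected. If $\phi(z_1)=\phi(z_2)$ with $z_1\ne z_2$, then $z_1,z_2$ lie in the same $\gamma_c$, and the monotonicity of $\operatorname{Re}\phi$ yields a contradiction, so $\phi$ is univalent. Finally $\phi(\gamma_c)$ is the horizontal slice of $\phi(\D)$ at height $c$, hence connected, so $\phi(\D)$ is CHD.

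The main obstacle is not conceptual but getting the orientation and signs in the key computation exactly right so that $J_f$ appears with a positive sign; once that is nailed down, the two implications fall out by symmetric topological arguments on the connectedness of the level arcs $\gamma_c$.
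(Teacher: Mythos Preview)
The paper does not actually prove this theorem; its entire proof is the sentence ``See, e.g., \cite[p.~37]{Dur}.'' Your argument is essentially the standard proof found there (due to Clunie and Sheil-Small), based on the identity $\operatorname{Im} f=\operatorname{Im}(h-g)$ and monotonicity of $\operatorname{Re} f$ and $\operatorname{Re}\phi$ along the common horizontal level arcs. So your approach is correct and matches the source the paper defers to.

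Two small points worth cleaning up. First, in your key display the factor should be $J_f/|\phi'|^{2}$ rather than $J_f/|\phi'|$: writing $\dot z=(\phi'\dot z)\,\overline{\phi'}/|\phi'|^{2}$ gives
\[
\operatorname{Re}\bigl((h'+g')\dot z\bigr)=\frac{\phi'\dot z}{|\phi'|^{2}}\,\operatorname{Re}\bigl((h'+g')\overline{(h'-g')}\bigr)=\frac{J_f}{|\phi'|^{2}}\,(\phi'\dot z),
\]
which of course does not affect the sign. Second, ``locally univalent'' alone only gives $J_f\neq 0$ (Lewy), not $J_f>0$; one should remark that $J_f$ has constant sign on $\D$ and that replacing $f$ by $\bar f$ if necessary reduces to the sense-preserving case, which leaves the CHD conclusion for $h-g$ unchanged.
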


\begin{proof}
See, e.g., \cite[p. 37]{Dur}.
\end{proof}

Suppose that $\varphi$ is a CHD conformal mapping. For a given dilatation $\omega$, the harmonic shear
$f=h+\overline{g}$ of $\varphi$ is obtained by solving the differential equations
\[
\left\{ \begin{split}
h'-g' &=\varphi', \\
\omega h' - g' & = 0.
\end{split} \right.
\]
From the above equations, we obtain
\begin{equation} \label{eqn: fun-h}
h(z) = \int_0^z \frac{\varphi'(\zeta)}{1-\omega(\zeta)} \, d\zeta.
\end{equation}
For the anti-analytic part $g$, we have
\begin{equation} \label{eqn: fun-g}
g(z) = \int_0^z \omega(\zeta) \frac{\varphi'(\zeta)}{1-\omega(\zeta)} \, d\zeta.
\end{equation}
Observe that $f$ can also be written as
\begin{equation} \label{eqn: fun-f}
f(z)  = 2 \, \textrm{Re} \left[ \int_0^z \frac{\varphi'(\zeta)}{1-\omega(\zeta)} \, d\zeta \right] - \overline{\varphi(z)}.
\end{equation}
The last equation is useful if the conformal mapping $\varphi$ is known. 


\section{Numerical Aspects}

Solving \eqref{eqn: fun-f} numerically, we shall use the change of variable $\zeta = zt$. Thus the analytic part of $f$ takes the following form
\begin{equation} \label{eqn: fun-h-int}
h(z) = \int_0^1 \frac{\varphi'(zt)}{1-\omega(zt)} z \, dt.
\end{equation} 
Then for the harmonic shear $f$, we have
\begin{equation} \label{eqn: fun-f-int}
f(z) = 2 \, \textrm{Re} \left[ \int_0^1 \frac{\varphi'(zt)}{1-\omega(zt)} z \, dt \right] - \overline{\varphi(z)}.
\end{equation}
Above integrals can be computed numerically, for example, by using the Gauss quadrature.

\subsection{Gauss Quadrature}
The key idea behind the Gauss quadrature is to choose the interpolation nodes in order to maximize the degree of exactness of the quadrature rule. In the Gauss quadrature one will consider integrals of the form
\begin{equation} \label{eqn: gauss-int}
I(g) = \int_a^b g(x)\, dx =  \int_a^b f(x)\eta(x)\, dx = \sum_{j=1}^N w_j f(x_j),
\end{equation}
where $\{ w_j, x_j \}_{j=1}^N$ is a quadrature rule corresponding to the weight function $\eta$. In \cite{golub-welsch}, Golub and Welsch gave algorithms to the Gauss quadrature for different weight functions. More in-depth discussion of the Gauss quadrature can be found, e.g., in \cite[Section 5.3]{gst}.

In our case, we have a singularity of the form $1/(1-\omega(z))$ caused by the dilatation. For some choices of dilatation, we can use the appropriate Gauss quadrature to deal with the singularity. However, in this article, we shall use the Gauss-Kronrod quadrature, which is also build in {\sc Matlab}. The Gauss-Kronrod quadrature is a generalization of a pure Gaussian quadrature and the method adds additional nodes to the Gauss rule with a way to control the error. It should also be noted that, the Gauss-Kronrod quadrature does not take possible singularities into account. For futher discussion of the Gauss-Kronrod quadrature, see \cite[pp. 299--300]{gst}.



\subsection{Setup of the Validation Test}
Numerical experiments are divided into two meshing of the unit disk. For the first part, the following mesh of the form $re^{i\theta}$, where $r = \{ k/20: k=0,1,2,\ldots, 20 \}$ and $\theta = \{ 2\pi k/40: k=0,1,2,\ldots, 40\}$, is used. The second mesh is refined near the boundary of the unit disk, where $r = \{ (990+k)/1000: k=0,1,2,\ldots, 10\}$ and meshing for the angle is the same as in the first mesh.

Validation of the numerical scheme is run against the analytic representation of the harmonic mappings. All conformal mappings obtained by the SC toolbox are computed with the precision setting: \texttt{precision = 1e-14}. 

Note that, by using \eqref{eqn: fun-f}, one have two sources of errors. The first one comes from the integral  presentation of $h$ and the second source of error comes from the conformal mapping $\varphi$ itself.

In the unit disk, the comparison of the harmonic mapping is done by using the following test function
\begin{equation} \label{eqn: test}
\textrm{test} = |f - Q(f)|,
\end{equation}
where $f$ is the analytic expression of the harmonic mapping and $Q(f)$ is given by the quadrature. Note that, by \eqref{eqn: fun-f-int}, we may write the test function as follows
\begin{equation} \label{eqn: test-by-part}
\textrm{test} = |f - Q(f)| \leq 2\,| \textrm{Re}(h - Q(h))| + |\varphi - Q(\varphi)|,
\end{equation}
where $Q(h)$ and $Q(\varphi)$ are obtained by the Gauss-Kronrod quadrature and the SC toolbox, respectively.



\section{Examples of Polygonal Mappings}

In this section, we consider polygonal examples. Let the conformal mapping be (see \cite[p. 196]{nehari})
\[
\varphi(z) = \int_0^z (1-\zeta^n)^{-2/n} \, d\zeta,
\]
which maps the unit disk $\D$ onto a regular $n$-gon. In \cite{driver}, Driver and Duren discussed the harmonic shear of $\varphi$ with the dilatation $\omega (z) = z^n$. They studied other dilatation choices as well. The author considered the dilatation $\omega (z) = z^{2n}$ in a joint work with Ponnusamy and Rasila \cite{pqr2}.

\subsection{Analytic Form}
In \cite{driver}, it was shown that for the dilatation $\omega(z)=z^n$, the harmonic shear of $\varphi$ is given by
\[ \left\{ \begin{split}
h(z) & = z F\left(1+\frac{2}{n}, \frac{1}{n}; 1+\frac{1}{n}; z^n \right), \\
g(z) & = \frac{z^{n+1}}{n+1} F\left(1+\frac{2}{n}, 1+\frac{1}{n}; 2+\frac{1}{n}; z^n \right),
\end{split} \right.
\]
where $F(a,b;c;z)$ if the Gaussian hypergeometric function. The function is defined as follows
\[
F(a,b;c;z) = 1 + \sum_{n=1}^\infty \frac{(a)_n (b)_n}{n! (c)_n} z^n, \quad |z| < 1,
\]
where
\[
(\alpha)_n = \alpha (\alpha+1) \cdots (\alpha+n-1) = \frac{\Gamma(\alpha+n)}{\Gamma(\alpha)}, \quad \alpha \in \C,
\]
is the Pochhammer symbol. For $\textrm{Re}\, c > \textrm{Re}\, b > 0$, this can also be written as the Euler integral
\[
F(a,b,c;z) = \frac{\Gamma(c)}{\Gamma(b)\Gamma(c-b)} \int_0^1 t^{b-1} (1-t)^{c-b-1} (1-zt)^{-a} \, dt.
\]

In case of $\omega(z)=z^{2n}$, the harmonic shear is shown, in \cite{pqr2}, to be 
\[ \left\{ \begin{split}
h(z) & = z F_1\left( \frac{1}{n}, 1+\frac{2}{n}, 1; 1+\frac{1}{n}; z^n, -z^n \right), \\
g(z) & = \frac{z^{2n+1}}{2n+1} F_1\left( 2 +\frac{1}{n}, 1 +\frac{2}{n}, 1; 3 +\frac{1}{n}; z^n, -z^n \right),
\end{split} \right.
\]
where $F_1(a,b_1,b_2;c;x,y)$ is the first Appell hypergeometric function \cite[p. 73]{bailey}, which is defined by
\[
F_1(a,b_1,b_2;c;x,y) = \sum_{k=0}^\infty \sum_{l=0}^\infty \frac{(a)_{k+l} (b_1)_k (b_2)_l}{(c)_{k+l} \,k!\, l!} x^k y^l,
\]
Appell hypergeometric functions can be given by Euler's integral as follows \cite[p. 77]{bailey}:
\[
F_1(a,b_1,b_2;c;x,y) = \frac{\Gamma(c)}{\Gamma(a)\Gamma(c-a)} \int_0^1 t^{a-1} (1-t)^{c-a-1} (1-xt)^{-b_1} (1-yt)^{-b_2} \, dt,
\]
where $\textrm{Re}\, c > \textrm{Re}\, a > 0$. 

\subsection{Example}
Let $\omega(z)=z^{2n}$, $n=4$. The reason we chose this dilatation is that, in \eqref{eqn: fun-h} we have eight singularities and it reveals the accuracy of our algorithm and as well as the accuracy of the SC toolbox. In Figure \ref{fig: sc-poly4-dila-z8}, we have reproduced the figures from \cite{pqr2} and numerically computed images using the presented algorithm. 

\begin{figure}[!ht]
\begin{center}
\subfloat[Visualisation of an analytic form given by Mathematica.]{\parbox{\textwidth}{\centering\includegraphics[width=0.43\textwidth]{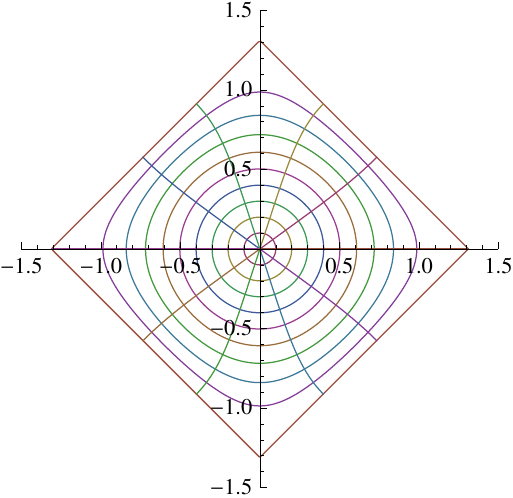} \hspace*{.3cm}
\includegraphics[width=0.43\textwidth]{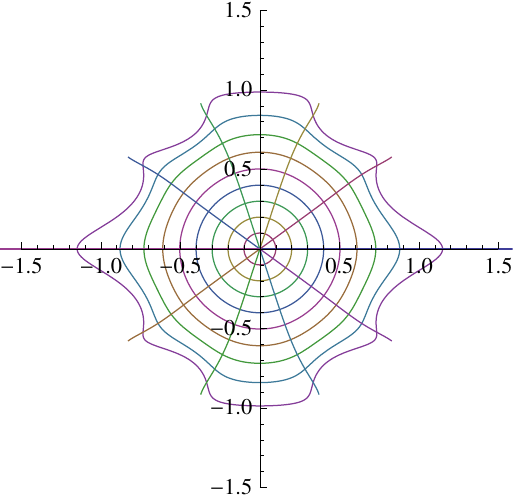}}} \\
\subfloat[A numerical version computed by {\sc Matlab}. Note that, the radius of the outermost circle is chosen to be 0.99, to avoid extensive amount of visual artefacts.]{\parbox{\textwidth}{\centering\includegraphics[width=\textwidth]{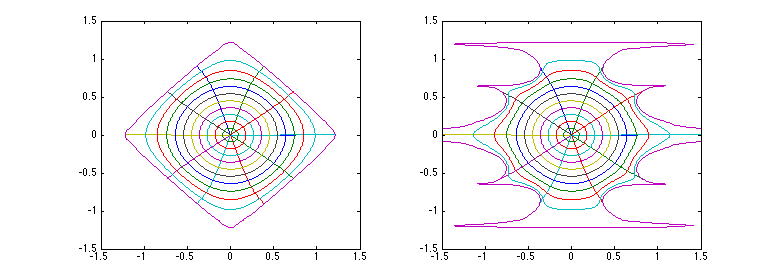} \hspace*{.3cm}}} \\
\caption{Conformal mapping $\varphi$ of the unit disk $\D$ onto a square and its harmonic shears with the dilatation $\omega(z) = z^{8}$.}  \label{fig: sc-poly4-dila-z8}
\end{center}
\end{figure}

In Figure \ref{fig: sc-poly-dila-z8-error-f}, we have the error of the test function \eqref{eqn: test} of the harmonic shear $f$ given in the logarithmic scale (with base 10). Note that, the error is given in the pre-image form, so that the figure can be shown in a more compact manner. The error corresponding to the second mesh can be given in polar coordinates as well, but the readability of the figure would be lost due to the poor resolution. Therefore, the error is given in Cartesian coordinates.

White areas in illustrations correspond to the loss of accuracy at singularities due to roots of the $1-z^8$. In the second mesh, near the boundary, we see that the rapid loss of accuracy occurs at the neighbourhood of the singularities.

\begin{figure}[!ht]
\begin{center}
\includegraphics[width=0.45\textwidth]{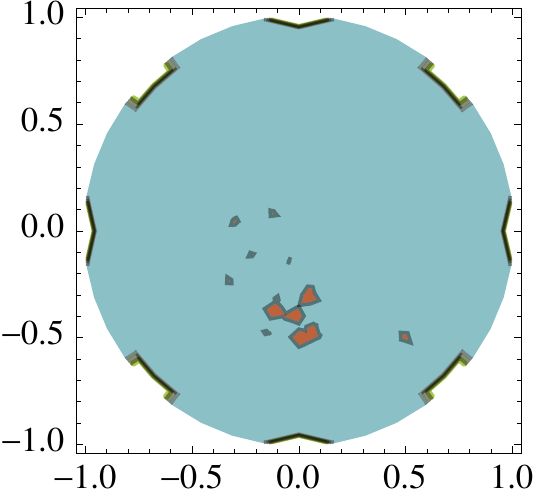} \hspace*{.2cm} 
\includegraphics[width=0.43\textwidth]{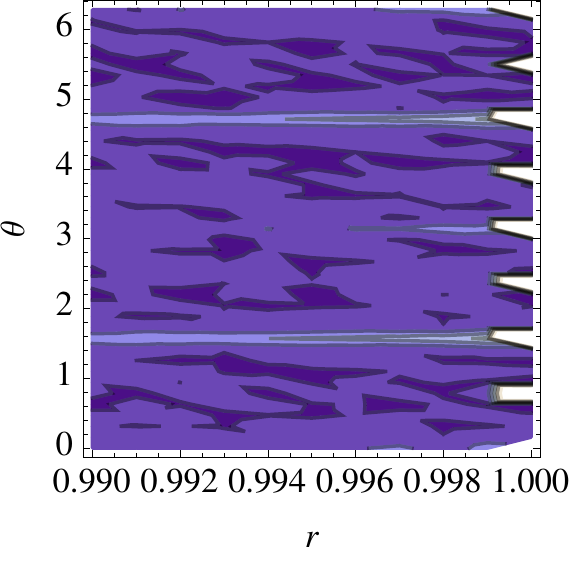} \\
\includegraphics[width=0.43\textwidth]{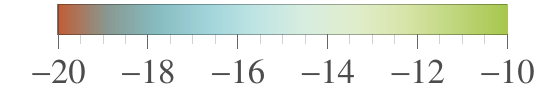} \hspace*{0.3cm} 
\includegraphics[width=0.43\textwidth]{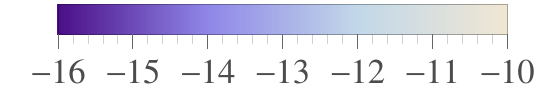}
\caption{Error analysis of the harmonic shear of conformal mapping $\varphi$ of the unit disk $\D$ onto a square with the dilatation $\omega(z) = z^{8}$. On the left hand side, we have the error of the first mesh, and on the right hand side, we the corresponding error near the boundary of the unit disk. Results are obtained by taking the logarithm (with base 10) of the test function \eqref{eqn: test}.} \label{fig: sc-poly-dila-z8-error-f}
\end{center}
\end{figure}

In Figure \ref{fig: sc-poly-dila-z8-error-h-phi}, we have errors corresponding to right hand side of the \eqref{eqn: test-by-part}. The conformal mapping itself has 4 singularities arising from the roots of $1-\zeta^4$. The loss of accuracy is around 4 digits in this particular example, which is nuisance but a not severe obstacle. 
Unfortunately, for the analytic part $h$ of $f$, even the leading digit is wrong at singularities most of the time. This can be improve if the type of singularities are known and the quadrature is adapted to take this information into account. Elsewhere the performance shows no significant shortcomings.

\begin{figure}[!ht]
\begin{center}
\subfloat[Error of the analytic part $h$ of $f$ in form of $2\,| \textrm{Re}(h - Q(h))|$ .]{\parbox{\textwidth}{\centering\includegraphics[width=0.45\textwidth]{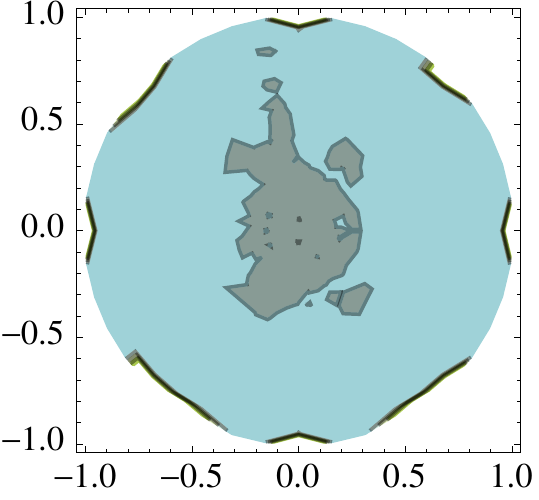} \hspace*{.2cm} 
\includegraphics[width=0.43\textwidth]{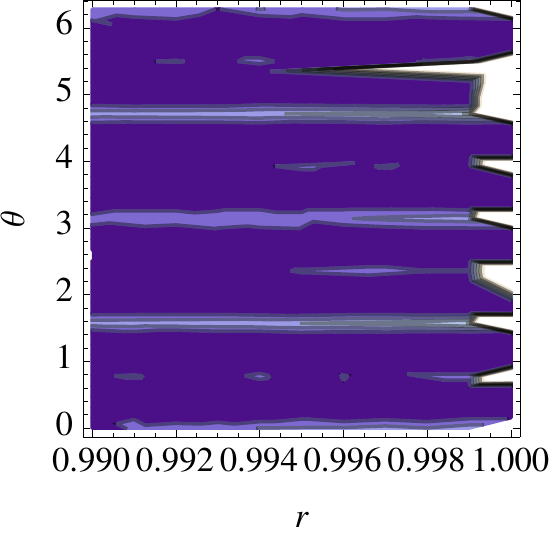} \\
\includegraphics[width=0.43\textwidth]{kuva/vertailu-sc-poly4-dila-z8-parempi-SC-legend-2014-04-03} \hspace*{0.3cm} 
\includegraphics[width=0.43\textwidth]{kuva/vertailu-sc-poly4-dila-z8-parempi-SC-legend-2014-04-03-reuna} }} \\
\subfloat[Error analysis of the conformal mapping $\varphi$ from the unit disk onto a square.]{\parbox{\textwidth}{\centering\includegraphics[width=0.45\textwidth]{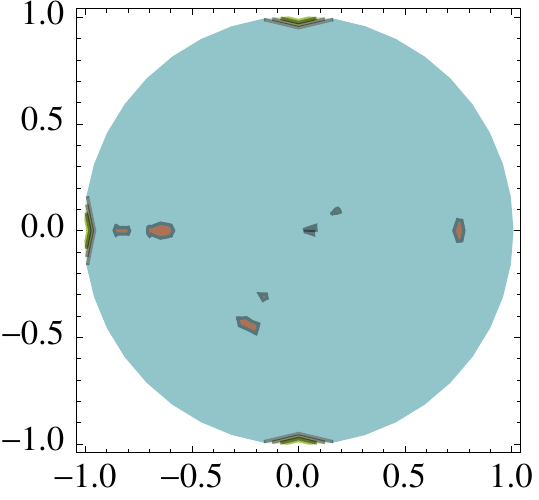} \hspace*{.2cm} 
\includegraphics[width=0.43\textwidth]{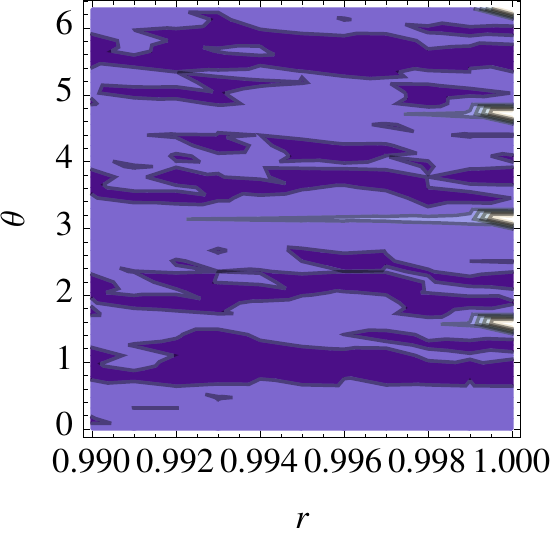} \\
\includegraphics[width=0.43\textwidth]{kuva/vertailu-sc-poly4-dila-z8-parempi-SC-legend-2014-04-03} \hspace*{0.3cm} 
\includegraphics[width=0.43\textwidth]{kuva/vertailu-sc-poly4-dila-z8-parempi-SC-legend-2014-04-03-reuna} }}
\caption{Error analysis of the analytic part $h$ of $f$ and the conformal mapping $\varphi$ in the case of the harmonic shear of conformal mapping $\varphi$ of the unit disk $\D$ onto a square with the dilatation $\omega(z) = z^{8}$. Results are obtained by taking the logarithm (with base 10) of the corresponding part of the test function \eqref{eqn: test-by-part}.} \label{fig: sc-poly-dila-z8-error-h-phi}
\end{center}
\end{figure}

\subsection{Polygonal Shears with Other Dilatations}
We shall reproduce the images from \cite{driver} for the dilation $\omega(z)=z^n$ and from \cite{pqr2} using dilatition $\omega(z)=z^{2n}$ with our numerical algorithm along with the error analysis. In Figure \ref{fig: sc-triangle-dila-z3-z6}, we have $n=3$. For illustrastions for $n=5$, see Figure \ref{fig: sc-pentagon-dila-z5-z10}.

\begin{figure}[!ht]
\begin{center}
\subfloat[Dilatation $\omega(z)=z^3$.]{\parbox{\textwidth}{\centering\includegraphics[width=0.44\textwidth]{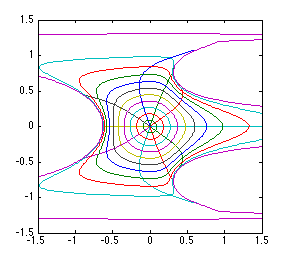} \hspace*{.1cm}
\includegraphics[width=0.4\textwidth]{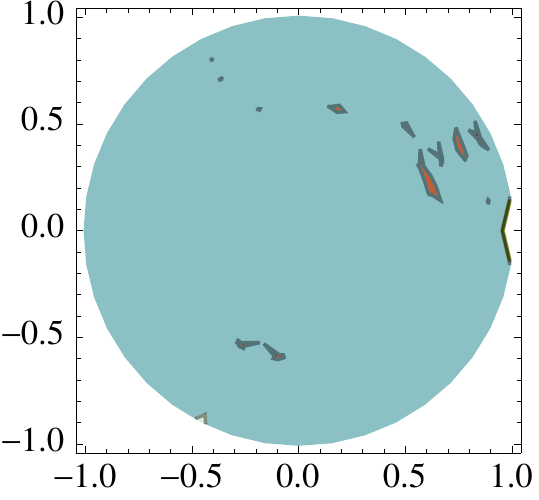} \hspace*{.1cm} 
\includegraphics[width=0.1\textwidth]{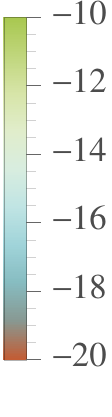} }} \\
\subfloat[Dilatation $\omega(z)=z^6$.]{\parbox{\textwidth}{\centering\includegraphics[width=0.44\textwidth]{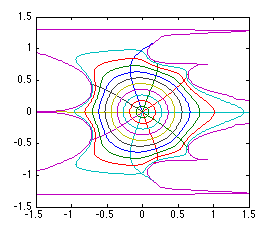} \hspace*{.1cm}
\includegraphics[width=0.4\textwidth]{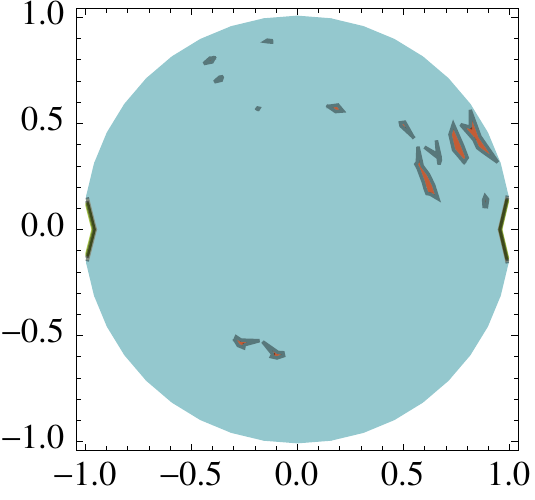} \hspace*{.1cm} 
\includegraphics[width=0.1\textwidth]{kuva/vertailu-sc-poly4-dila-z8-parempi-SC-legend-2014-04-14-vertical} }}
\caption{Reproduction from \cite{driver} and \cite{pqr2} of the conformal mapping $\varphi$ of the unit disk $\D$ onto a triangle and its harmonic shears with dilatation $\omega(z) = z^3, z^6$, respectively. The error is given in the logarithmic (base 10) scale. Note that, the radius of the outermost circle is chosen to be $0.99$, to avoid extensive amount of visual artefacts.}  \label{fig: sc-triangle-dila-z3-z6}
\end{center}
\end{figure}

\begin{figure}[!ht]
\begin{center}
\subfloat[Dilatation $\omega(z)=z^5$.]{\parbox{\textwidth}{\centering\includegraphics[width=0.44\textwidth]{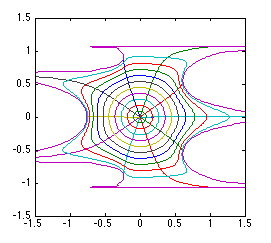} \hspace*{.1cm}
\includegraphics[width=0.4\textwidth]{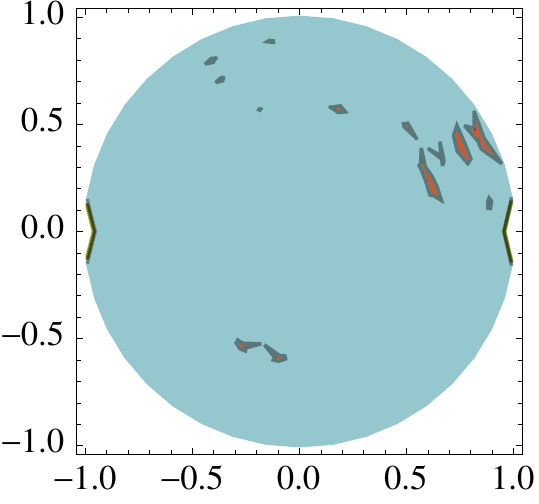} \hspace*{.1cm} 
\includegraphics[width=0.1\textwidth]{kuva/vertailu-sc-poly4-dila-z8-parempi-SC-legend-2014-04-14-vertical} }} \\
\subfloat[Dilatation $\omega(z)=z^{10}$.]{\parbox{\textwidth}{\centering\includegraphics[width=0.44\textwidth]{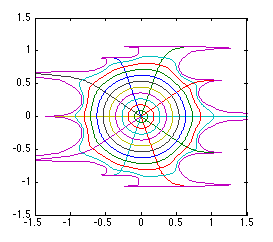} \hspace*{.1cm}
\includegraphics[width=0.4\textwidth]{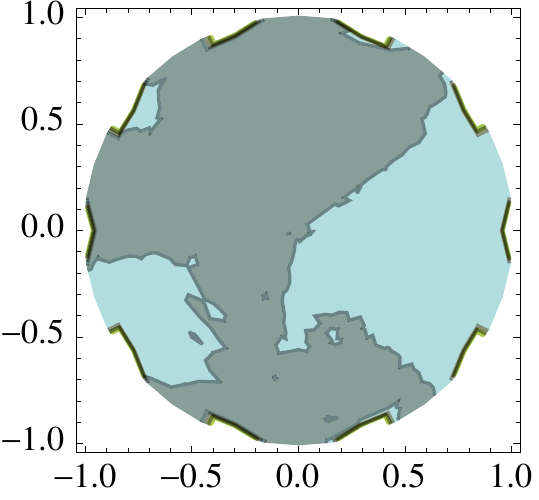} \hspace*{.1cm} 
\includegraphics[width=0.1\textwidth]{kuva/vertailu-sc-poly4-dila-z8-parempi-SC-legend-2014-04-14-vertical} }}
\caption{Reproduction from \cite{driver} and \cite{pqr2} of the conformal mapping $\varphi$ of the unit disk $\D$ onto a pentagon and its harmonic shears with dilatation $\omega(z) = z^5, z^{10}$, respectively. The error is given in the logarithmic (base 10) scale. Note that, the radius of the outermost circle is chosen to be $0.99$, to avoid extensive amount of visual artefacts.}  \label{fig: sc-pentagon-dila-z5-z10}
\end{center}
\end{figure}


\section{Minimal Surfaces}
It is known that a harmonic function $f = h + \overline{g}$ can be lifted to a minimal surface if and only if the
dilatation $\omega$ is the square of an analytic function. Suppose that $\omega = q^2$ for some analytic
function $q$ in the unit disk $\D$. Then the corresponding minimal surface has the form
\[
\{u,v,w\} = \{\textrm{Re}\, f,  \textrm{Im}\, f, 2\,\textrm{Im}\, \psi\},
\]
where
\[
\psi(z) = \int_0^z q(\zeta) \frac{\varphi(\zeta)}{1-\omega(\zeta)} \, d\zeta.
\]
In \cite{driver}, Driver and Duren computed $\psi$ for a conformal mapping $\varphi$ from unit disk onto a $n$-gon with dilatation $\omega(z)=z^n$. Computation is done with assumption that $n$ is even. In this case the minimal surface lifting is given by
\[
\psi(z) = \frac{2 z^{1+1/n}}{n+2} F\left(1+\frac{2}{n}, \frac{1}{2}+\frac{1}{n}; \frac{3}{2}+\frac{1}{n}; z^n \right).
\]
In Figure \ref{fig: square-minimal}, we have a illustration for $n=4$ given by the exact solution and the numerical scheme.

\begin{figure}[!ht]
\begin{center}
\includegraphics[width=0.4\textwidth]{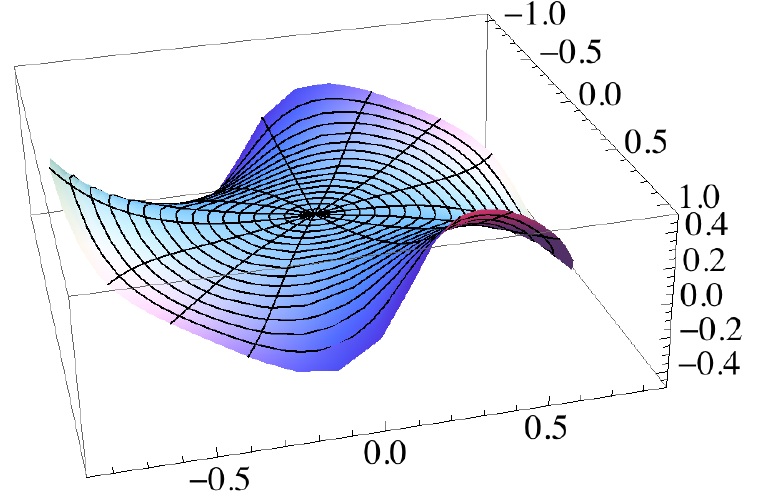} \hspace*{.1cm} 
\includegraphics[width=0.4\textwidth]{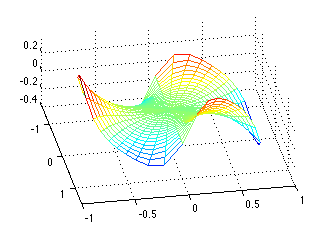}
\caption{Minimal surface of the conformal mapping $\varphi$ of unit disk onto $n$-gon with a dilatation $\omega(z)=z^n$, $n=4$. On the left hand side, we have a illustration given by the exact solution. In comparison, on the right hand side, we have a numerically computed version of the minimal surface. To give a sensible illustration, we have chosen the outermost circle's radius to be $0.8$. This example is a reproduction from \cite{driver}. }  \label{fig: square-minimal}
\end{center}
\end{figure}


\section{Conclusions}
In this article, we have given an algorithm to numerically shear CHD conformal mappings. Required integrations are done using a standard Gauss-Kronrod quadrature. From given examples against analytic expression, we found that our numerical method's performance is satisfactory on all mesh points we have considered except singularities. The accuracy can be improved if the type of the singularity is known and the algorithm is tweaked to work around it. 






\end{document}